\documentclass[12pt]{article}
\usepackage{latexsym,color,amsmath,amsthm,amssymb,amscd,amsfonts}
\usepackage{scalefnt}
\usepackage[font=small,labelfont=bf]{caption}
\usepackage{float}
\usepackage{graphicx}
\usepackage{amsopn}
\usepackage{relsize}
\usepackage{mathrsfs}
\usepackage{upgreek}

\setlength{\textwidth}{6.0in} \setlength{\evensidemargin}{0.25in}
\setlength{\oddsidemargin}{0.25in} \setlength{\textheight}{9.0in}
\setlength{\topmargin}{-0.5in} \setlength{\parskip}{2mm}
\setlength{\baselineskip}{1.7\baselineskip}

\newtheoremstyle{nonum}{}{}{\itshape}{}{\bfseries}{.}{ }{\thmnote{#3}}

\newtheorem{thm}{Theorem}[section]
\newtheorem*{thm*}{Theorem}

\newtheorem{lem}[thm]{Lemma}

\newtheorem{conj}[thm]{Conjecture}

\newtheorem*{definition*}{Definition}

\newtheorem*{rems*}{Remarks}

\theoremstyle{nonum}

\newcommand{\R}{\mathbb R}
\newcommand{\RR}{\mathbb R}

\def\vol{{ \rm vol_n}}

\begin{document}
\title{Convex Floating Bodies of Equilibrium \thanks
{Keywords: Ulam Problem, floating bodies,  2020 Mathematics Subject Classification: 52A20}}
\date{}
\author{D.I. Florentin, C. Sch\"utt, E.M. Werner\thanks{Partially
		supported by NSF grant DMS-1811146 and by a Simons Fellowship}, N. Zhang}
\maketitle
\begin{abstract}
We study a long standing open problem by Ulam, which is whether the
Euclidean ball is the unique body of uniform density which will float
in equilibrium in any direction. We answer this problem in the class
of origin symmetric  $n$-dimensional convex bodies whose relative density  to  water
is $\frac{1}{2}$. For $n=3$, this result is due to Falconer.
\end{abstract}

\section{Introduction and results}\label{Sec_Intro}
\subsection{Ulam floating bodies}

A long standing open problem asked by Ulam in  \cite{Ulam2} (see also  \cite{Ulam1}, Problem 19), 
is whether the Euclidean ball is the unique body of uniform density
which floats in a liquid in equilibrium in any direction. We call
such a body {\it Ulam floating body}.
\par
\noindent
A two-dimensional counterexample was found for relative density
$\rho = \frac12$ by Auerbach \cite{Auerbach} and for densities
$\rho \neq \frac12$ by Wegner \cite{Wegner1}. These counterexamples
are not origin symmetric. For higher dimension, Wegner obtained 
results for non-convex bodies (holes in the body are allowed) in
\cite{Wegner2}. The problem remains largely open in the class of
convex bodies in higher dimension. In
order to study Ulam floating bodies, we use the notion of the {\it
convex floating body}, which was introduced independently by
B\'ar\'any and Larman \cite{BaranyLarman1988} and by Sch\"utt and
Werner \cite{SW1}. Let $K$ be a convex body in $\RR^n$ and let
$\delta \in \RR$, $0 \le \delta \leq \frac{1}{2}$. Then the convex
floating body $K_\delta$ is defined as
\[
K_\delta =
\bigcap_{u \in S^{n-1}}
H^-_{\delta,u}.
\]
Here $H^+_{\delta,u}$ is the halfspace with outer unit normal vector
$u$, which ``cuts off'' a  $\delta$  proportion of $K$,
i.e. $\vol\left( K\cap H^+_{\delta, u} \right) = \delta \  \vol(K)$.
The convex floating body is a natural variation of Dupin's floating
body $K_{[\delta]}$ (see \cite{Dupin}). $K_{[\delta]}$ is this convex body 
contained in $K$ such that every support hyperplane cuts off a set of 
volume $\delta \  \vol(K)$ exactly. 
In general $K_{[\delta]}$ need not exist.  An example is e.g.,  the simplex in $\R^n$.
Dupin showed that if it  exists, each supporting hyperplane $H$ touches $K_{[\delta]}$ at the centroid of $K \cap H$.
If the floating body $K_{[\delta]}$ exists, it is equal to the convex floating body $K_\delta$.
It was shown in \cite{MR} that for a
symmetric convex body $K$, one has $K_{[\delta]} = K_\delta$.
\par
We recall the relation between the density $\rho$ and the volume  $\delta\vol(K)$ that is cut off.
If the liquid has density $1$ and the body $K$ has unit volume and
density $\rho$, then by Archimedes' law the submerged volume equals
the total mass of the body, i.e. $\rho$, and consequently the floating
part has volume $\delta = 1 -\rho$.
\par
In \cite{HS}, the authors defined the {\em metronoid} $M_\delta(K)$
of a convex body $K$ to be the body whose boundary consists of
centroids of the floating parts of $K$, i.e. $K\cap H^+_{\delta, u}$.
More precisely, denoting
$X_{K,\delta}(u) =
\frac{1}{\delta \vol(K)} \int_{K\cap H_{\delta,u}^+} x\,dx$,
they defined $M_{\delta}(K)$ by
\[
\partial M_\delta(K) =
\left\{
X_{K,\delta}(u) \,:\, u\in S^{n-1}\right\},
\]
and showed that $X_{K,\delta} : S^{n-1} \to \partial M_\delta(K)$ is
the Gauss map of $M_\delta(K)$, i.e. the normal to $M_{\delta}(K)$ at
$X_{K,\delta}(u)$ is $u$. Huang, Slomka and  Werner showed that $K$
is an Ulam floating body if and only if $M_\delta(K)$ is a ball (see
\cite[Section 2.2]{HSW} for details). We utilize this
characterization in our proof of Theorem \ref{thm1}.

\subsection{Main results}
We present two results concerning Ulam's problem. We first establish
a relation between Ulam floating bodies and a uniform
isotropicity property of sections. 
Our second
result provides a short proof of a known answer to Ulam's problem 
in the class of symmetric convex bodies with relative density
$\rho = 1/2$.  
\par
\noindent
In the next theorem, and elsewhere, we use the notation $g(B)$ for the centroid of a set $B$.
\par
\noindent
\begin{thm}\label{thm1}
Let $\delta \in \left(0, \frac12\right]$ and let $K\subset\R^n$ be a convex body such
that $K_\delta$ is $C^1$ or $K_\delta = K_{[\delta]}$ reduces to a point.
Then $K$ is an Ulam floating body 
if and only if there exists $R>0$ such that for all $u \in S^{n-1}$
and $v \in S^{n-1} \cap u^\perp$,
\begin{equation}\label{eq-isotropic}
\int_{K\cap H_{\delta, u}} \langle x, v \rangle^2 - \langle  g(K \cap  H_{\delta, u}), v \rangle^2 \,dx = \delta  \  \vol(K) R.
\end{equation}
In that case, $M_\delta(K)$ is a ball of radius $R$.
\end{thm}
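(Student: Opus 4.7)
The plan is to combine the \cite{HSW} characterization---that $K$ is an Ulam floating body if and only if $M_\delta(K)$ is a Euclidean ball---with a direct computation of the differential of $X_{K,\delta}\colon S^{n-1}\to\partial M_\delta(K)$. Since $X_{K,\delta}$ parametrizes $\partial M_\delta(K)$ by outer normals (it is the inverse Gauss map of $M_\delta(K)$), the body $M_\delta(K)$ is a ball of radius $R$ centered at some $c\in\R^n$ if and only if $X_{K,\delta}(u)=c+Ru$ for every $u\in S^{n-1}$, equivalently $dX_{K,\delta}(u)[v]=Rv$ for every $u$ and every $v\in u^\perp$. The task thus reduces to computing $dX_{K,\delta}(u)$ and matching the result against \eqref{eq-isotropic}.

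To set up the computation, write $H_{\delta,u}=\{x:\langle x,u\rangle=f(u)\}$, where $f$ is determined implicitly by the constraint $\vol(K\cap H^+_{\delta,u})=\delta\vol(K)$. The hypothesis that $K_\delta$ is $C^1$ (or, in the degenerate case, a single point) is precisely what guarantees the $C^1$-smoothness of $u\mapsto f(u)$, and hence of $X_{K,\delta}$, so that the forthcoming moving-domain differentiation is justified. Differentiating the volume constraint along $u_t=u+tv$ with $v\in u^\perp$ by the Reynolds transport formula yields
\[
0=\int_{K\cap H_{\delta,u}}\bigl(\langle x,v\rangle-df(u)[v]\bigr)\,dx,
\]
so $df(u)[v]=\langle g,v\rangle$, where $g=g(K\cap H_{\delta,u})$ is the centroid of the section; this is the infinitesimal form of Dupin's property. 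In the case $K_\delta=\{p\}$, Dupin forces $g=p$ and $f(u)=\langle p,u\rangle$ for every $u$, and the identity holds trivially.

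Applying the same formula to the vector-valued integral $\delta\vol(K)\,X_{K,\delta}(u)=\int_{K\cap H^+_{\delta,u}}x\,dx$ and substituting the relation for $df(u)[v]$ gives, after absorbing the vanishing term $g\int\langle x-g,v\rangle\,dx=0$,
\[
\delta\vol(K)\,dX_{K,\delta}(u)[v]=\int_{K\cap H_{\delta,u}}(x-g)\,\langle x-g,v\rangle\,dx =: T_u v.
\]
The operator $T_u$ is a symmetric endomorphism of $u^\perp$, and pairing the identity with $v$ produces the variance identity
\[
\delta\vol(K)\,\langle dX_{K,\delta}(u)v,v\rangle =\int_{K\cap H_{\delta,u}}\langle x-g,v\rangle^2\,dx =\int_{K\cap H_{\delta,u}}\bigl(\langle x,v\rangle^2-\langle g,v\rangle^2\bigr)\,dx.
\]
Thus condition \eqref{eq-isotropic} is exactly the statement that $\langle dX_{K,\delta}(u)v,v\rangle=R$ for every unit $v\in u^\perp$. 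By symmetry of $dX_{K,\delta}(u)$ on $u^\perp$ and polarization, this is equivalent to $dX_{K,\delta}(u)=R\cdot\mathrm{Id}_{u^\perp}$, which by the first paragraph is equivalent to $M_\delta(K)$ being a ball of radius $R$, equivalently to $K$ being an Ulam floating body.

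The main technical hurdle is the rigorous justification of the moving-domain differentiations under the stated regularity, ensuring in particular that the implicit function $f$ inherits sufficient smoothness from the hypothesis on $K_\delta$; the algebraic passage from the vector identity to the scalar condition \eqref{eq-isotropic} is then a routine polarization.
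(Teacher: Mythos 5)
Your proof is correct, and it takes a genuinely different, more conceptual route than the paper. The paper works in explicit coordinates: for $n=2$ it uses polar coordinates (degenerate case $K_\delta=\{0\}$) or the coordinate system $F(r,\theta)=\gamma(\theta)+rT(\theta)$ adapted to $\partial K_\delta$, computes $\frac{d}{d\phi}X_{K,\delta}(n(\phi))$ in closed form, and identifies its norm with the radius of curvature of the curve $\partial M_\delta(K)$; for $n\geq3$ it projects onto each two-plane $W=\mathrm{span}\{u,v\}$, repeats the 2D computation via the parallel section function $A_{K,W}$, concludes each projection $M_\delta(K)\big|W$ is a disk, and then invokes the fact (Gardner, Corollary~3.1.6) that a convex body all of whose 2D projections are disks is a ball. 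You instead compute the full differential $dX_{K,\delta}(u)$ in $\R^n$ at once via the Reynolds transport formula, observe that it is a symmetric endomorphism of $u^\perp$ whose quadratic form is the variance integral in \eqref{eq-isotropic}, and use polarization to upgrade ``constant diagonal'' to $dX_{K,\delta}(u)=R\cdot\mathrm{Id}_{u^\perp}$; integrating along the connected sphere then gives $X_{K,\delta}(u)=c+Ru$ directly. Your route avoids both the coordinate calculation and the projection/Gardner step, at the cost of needing to justify the moving-domain differentiation and to verify the symmetry and polarization argument for $T_u$ (both of which you note, and both of which are sound). One small imprecision: the $C^1$-smoothness of $u\mapsto f(u)=h_{K_\delta}(u)$ follows from \emph{strict convexity} of $K_\delta$ (which holds automatically, by Sch\"utt--Werner), whereas the hypothesis that $\partial K_\delta$ is $C^1$ (or $K_\delta$ a point) is what guarantees $K_\delta=K_{[\delta]}$, i.e.\ that the cutting hyperplane $H_{\delta,u}$ actually supports $K_\delta$ and touches at the centroid---this is what legitimizes the Dupin-type identification $df(u)[v]=\langle g,v\rangle$ that you derive. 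That distinction does not affect the validity of your argument, but it is worth stating accurately.
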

\par
\noindent
{\bf Remark.}
Note that if $K_\delta $ reduces to a point, which without loss of generality we can assume to be $0$,  then the condition (\ref {eq-isotropic}) reduces to 
\begin{equation}\label{eq-isotropic point}
\int_{K\cap H_{\delta, u}} \langle x, v \rangle^2  \,dx = \delta  \ \vol(K) R.
\end{equation}
\vskip 2mm
\noindent
We use the characterization given in Theorem \ref{thm1} to give
a short proof of the following result which was proved in dimension $3$ by Falconer \cite{Falconer}.
It also follows from a result in \cite{Schneider}.
\par
\noindent
\begin{thm}\label{thm2}
Let $K\subset\R^n$ be a symmetric convex body of  volume $1$ and density $\frac12$.
If $K$ is an Ulam floating body, then $K$ is a ball.
\end{thm}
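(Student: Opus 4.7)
The plan is to invoke Theorem~\ref{thm1} to translate the Ulam floating property into a moment condition on central hyperplane sections of $K$, and then close the argument via the injectivity of the spherical Radon transform. First, since $K$ has volume $1$ and density $\frac{1}{2}$, we have $\delta = 1-\frac{1}{2} = \frac{1}{2}$, and origin-symmetry of $K$ forces every hyperplane through the origin to bisect $K$. Consequently $H_{1/2,u}$ is exactly $u^\perp$ for every $u \in S^{n-1}$, and by the characterization of the convex floating body one sees $K_{1/2}=K_{[1/2]}=\{0\}$. Theorem~\ref{thm1} thus applies in its second case, and by the remark following it there exists $R>0$ such that
\[
\int_{K\cap u^\perp}\langle x,v\rangle^{2}\,dx \;=\; \frac{R}{2}
\]
for every $u\in S^{n-1}$ and every unit $v\in u^\perp$.

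Next, fix $u$ and sum this identity over an orthonormal basis $v_1,\dots,v_{n-1}$ of $u^\perp$. Using $|x|^{2}=\sum_{i}\langle x,v_i\rangle^{2}$ for $x\in u^\perp$, this yields
\[
\int_{K\cap u^\perp}|x|^{2}\,dx \;=\; \frac{(n-1)R}{2}.
\]
Passing to polar coordinates on the $(n-1)$-subspace $u^\perp$, with $\rho_K$ denoting the radial function of $K$, rewrites the left-hand side as $\tfrac{1}{n+1}\int_{S^{n-1}\cap u^\perp}\rho_K(\theta)^{\,n+1}\,d\theta$. Hence the spherical Radon (Funk) transform of the continuous even function $\rho_K^{\,n+1}$ takes the constant value $\tfrac{(n^2-1)R}{2}$ on $S^{n-1}$.

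The proof then concludes by the classical injectivity of the spherical Radon transform on even continuous functions: constants are mapped to constants, so $\rho_K^{\,n+1}$, and therefore $\rho_K$, must itself be constant on $S^{n-1}$, which forces $K$ to be a Euclidean ball centered at the origin. I do not anticipate any real obstacle here: Theorem~\ref{thm1} does the heavy lifting, and the only subtlety is recognizing that averaging the one-dimensional isotropy identity over an orthonormal basis of $u^\perp$ yields precisely the radial second moment needed to feed into Funk's theorem.
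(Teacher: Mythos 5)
Your proof is correct and follows essentially the same path as the paper's: apply Theorem~\ref{thm1} via the remark to get the constant second-moment condition, eliminate the dependence on $v$ to obtain that the spherical Radon transform of $r_K^{\,n+1}$ is constant, and conclude by injectivity on even functions. The only cosmetic difference is that you sum the identity over an orthonormal basis of $u^\perp$ to produce $\int_{K\cap u^\perp}|x|^2\,dx$ directly, whereas the paper integrates $\langle\xi,v\rangle^2$ over $v\in S^{n-2}$ against normalized Haar measure; the two symmetrizations are equivalent and lead to the same Funk-transform step.
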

\section{Background}
We collect some definitions and basic results that we use throughout the paper. For further facts in convex geometry we refer the reader  to the books by Gardner \cite{GardnerBook} and Schneider \cite{SchneiderBook}.
 \vskip 2mm
 \noindent
The radial function $r_{K, p}: S^{n-1} \to \R^+$ of a convex body $K$   about a point $p\in\R^n$ is defined by
\[
r_{K,p} (u) = \max\{\lambda \ge 0: \lambda u  \in K-p\}.
\]
If $0 \in \text{int}(K)$, the interior of $K$, we simply write $r_K$ instead of $r_{K,0}$.
\newline
Let $K\subset \R^n$ be a convex body
containing a strictly convex body $D$ in its interior, and let $H$ be
a hyperplane supporting $D$ at a point $p$. If $u$ is the outer unit
normal vector at $p$, we denote the restriction of the radial
function $r_{K\cap H, p}\,$ to $S^{n-1}\cap u^\perp\,$ by
$\,r_{K,D}(u, \cdot)$.
\par
\noindent
We denote by $B^n_2$ the Euclidean unit ball centered at $0$ and by $S^{n-1} = \partial B^n_2$ its boundary.
The spherical Radon transform $R:C(S^{n-1}) \to C(S^{n-1})$ is defined by 
\begin{equation} \label{Radon}
R f(u) = \int_{u^\perp \cap S^{n-1}} f(x) dx
\end{equation}
for every $f \in C(S^{n-1})$.
\vskip 2mm
\noindent
\subsection{Some results on floating bodies}

Since $\delta > \frac{1}{2}$ implies $K_\delta = \emptyset$, we restrict our
attention to the range $\delta \in \left[0, \frac{1}{2} \right]$. 
It was shown in \cite{SchuettWerner94} that there is   $\delta_c$, $0 < \delta_c \leq \frac{1}{2}$ such that $K_{\delta_c}$ 
reduces to a point. It can happen that $\delta_c < \frac {1}{2}$. An example is the simplex.
\par
\noindent
In fact, Helly's Theorem (and a simple union bound) implies that 
$\delta_c > \frac {1}{n+1}$, so we have $\delta_c \in
\left( \frac {1}{n+1}, \frac{1}{2} \right]$. 
\par
\noindent
As mentioned above, when  Dupin's floating body $K_{[\delta]}$
exists, it coincides with the convex floating body $K_\delta$.
The following lemma states that existence of $K_{[\delta]}$ is also
guaranteed by smoothness of $K_\delta$. We use this for Theorem \ref{thm1}.
\begin{lem}
	If $K_\delta$ is $C^1$, then $K_{[\delta]}$ exists and $K_\delta=K_{[\delta]}$.
\end{lem}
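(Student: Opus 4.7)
The plan is to prove the equivalent assertion that for every $u \in S^{n-1}$ the cutting hyperplane $H_{\delta,u}$ is actually a support hyperplane of $K_\delta$. Once this holds, $K_{[\delta]}$ must exist and coincide with $K_\delta$: because $K_\delta$ is $C^1$, its unique support hyperplane at any boundary point with outer normal $u$ must be $H_{\delta,u}$, and the latter cuts off volume $\delta\,\vol(K)$ from $K$ by definition, which is exactly Dupin's condition.

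Write $H_{\delta,u} = \{x : \langle x,u\rangle = t_\delta(u)\}$, where $t_\delta:S^{n-1}\to\R$ is continuous by continuity of truncated volume. Since $K_\delta = \bigcap_u H^-_{\delta,u}$, the support function satisfies $h_{K_\delta}(u) \leq t_\delta(u)$ for every $u$, so the task reduces to the reverse inequality. Fix $u_0 \in S^{n-1}$ and choose any $x_0 \in \partial K_\delta$ attaining $\langle x_0, u_0 \rangle = h_{K_\delta}(u_0)$. I claim there exists $u_1 \in S^{n-1}$ with $x_0 \in H_{\delta,u_1}$. If not, then $\langle x_0, u\rangle < t_\delta(u)$ for every $u$, and compactness of $S^{n-1}$ together with continuity of $t_\delta$ produces a uniform $\eps > 0$ with $\langle x_0,u\rangle \leq t_\delta(u)-\eps$. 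But then any $y$ with $|y-x_0| < \eps$ satisfies $\langle y, u\rangle \leq t_\delta(u)$ for every $u$, so $y \in K_\delta$, contradicting $x_0 \in \partial K_\delta$.

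For the chosen $u_1$, the hyperplane $H_{\delta,u_1}$ supports $K_\delta$ at $x_0$ with outer normal $u_1$, because $K_\delta \subset H^-_{\delta,u_1}$ and $x_0 \in H_{\delta,u_1}$. The $C^1$ hypothesis on $K_\delta$ says that $x_0$ admits a unique support hyperplane, so a unique outer normal. Since $u_0$ is also an outer normal at $x_0$ (the hyperplane $\{\langle x,u_0\rangle = h_{K_\delta}(u_0)\}$ supports $K_\delta$ there), we conclude $u_1 = u_0$, giving $h_{K_\delta}(u_0) = \langle x_0,u_0\rangle = t_\delta(u_0)$. As $u_0$ was arbitrary, $h_{K_\delta} \equiv t_\delta$ on $S^{n-1}$, and the lemma follows.

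The only subtle step is the claim that the boundary point $x_0$ lies on one of the defining hyperplanes $H_{\delta,u_1}$; this is the standard fact that a boundary point of an intersection of closed half-spaces belongs to the boundary of at least one, but it must be justified with a compactness argument because the intersection is taken over a continuum of half-spaces rather than finitely many. Everything else is a direct consequence of the $C^1$ hypothesis and the definition of $K_\delta$.
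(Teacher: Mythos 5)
Your proof is correct, and it takes a genuinely more self-contained route than the paper. The paper simply cites a result from Sch\"utt--Werner (1994) asserting that through every boundary point $x$ of $K_\delta$ there passes a hyperplane cutting off exactly $\delta \vol(K)$ and touching $\partial K_\delta$; combined with the $C^1$ hypothesis (which forces the supporting hyperplane at each boundary point to be unique), the lemma follows in two lines. You instead re-derive the needed existence statement from scratch: your compactness-and-separation argument shows directly that every boundary point $x_0$ of $K_\delta = \bigcap_u H^-_{\delta,u}$ lies on at least one of the defining hyperplanes $H_{\delta,u_1}$, since otherwise a uniform gap $\eps = \min_u\bigl(t_\delta(u) - \langle x_0, u\rangle\bigr) > 0$ would place an open ball around $x_0$ inside $K_\delta$. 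The identification $u_1 = u_0$ via uniqueness of the outer normal under $C^1$ is then the same step as in the paper. What your version buys is independence from the cited reference (at the small cost of needing to note the continuity of $u \mapsto t_\delta(u)$, which is standard); what the paper's version buys is brevity, plus the additional information that the supporting hyperplane meets $K_\delta$ at the barycenter of the section $H \cap K$, which is Dupin's characterization and is used elsewhere in the paper but not strictly needed for this lemma.
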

\par
\noindent
\begin{proof}
	Let  $x \in \partial K_\delta$. By \cite{SchuettWerner94},
	there is at least one hyperplane $H$ through $x$ that cuts off
	exactly $\delta \vol(K)$ from $K$ and this hyperplane touches
	$\partial K_\delta$ in the barycenter of $H\cap K$. As $K_\delta$ is
	$C^1$, the hyperplane at every boundary point $x \in K_\delta$ is
	unique. Thus $K_{[\delta]}$ exists and  $K_\delta = K_{[\delta]}$.
\end{proof}

\vskip 3mm
\noindent
We know that $K_{\frac {1}{2}} = \{0\}$ for every centrally symmetric convex body $K$.
The Homothety Conjecture  \cite{SchuettWerner94} (see also \cite{Stancu, WernerYe}),  states that the
homothety $K_\delta = t(\delta)  K$ only occurs for ellipsoids. We treat the
following  conjecture which has a similar flavor.
\begin{conj}
	Let $K\subset \R^n$ be a convex body and let $\delta\in \left( 0, \frac {1}{2}\right)$.
	If $K_\delta$ is a Euclidean ball, then $K$ is a Euclidean ball.
\end{conj}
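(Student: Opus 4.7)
The plan is to normalize, apply the lemma above to extract Dupin's identity, and then try to upgrade the resulting zeroth- and first-moment information into the isotropic second-moment identity (\ref{eq-isotropic}) of Theorem~\ref{thm1}. After translating so that $K_\delta = RB^n_2$ is centered at the origin, the lemma yields $K_\delta = K_{[\delta]}$ (since any ball is $C^1$), and Dupin's theorem then gives $g(K\cap H_{\delta,u}) = Ru$ for every $u\in S^{n-1}$, while the defining property of $K_\delta$ provides $H_{\delta,u}=\{x:\langle x,u\rangle=R\}$ and $\vol(K\cap H^+_{\delta,u}) = \delta\vol(K)$. These are two families of identities on $K$, indexed by $u\in S^{n-1}$.

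The key step would be to derive the second-moment identity by differentiating Dupin's identity along $u$. If $v \in S^{n-1}\cap u^\perp$ and $u(t)$ is a smooth curve on $S^{n-1}$ with $\dot u(0)=v$, then the right-hand side of $g(K\cap H_{\delta,u(t)}) = Ru(t)$ differentiates at $t=0$ to $Rv$, while the left-hand side is a weighted $(n-1)$-dimensional integral over a varying domain. A divergence/coarea computation should express its derivative in terms of the moment tensor $\int_{K\cap H_{\delta,u}} (x-Ru)\otimes(x-Ru)\,dx$, and matching components against $v$ should force this tensor, restricted to $u^\perp$, to be an isotropic multiple of the identity with trace $\delta\vol(K)R$ --- which is precisely condition (\ref{eq-isotropic}). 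Invoking Theorem~\ref{thm1} would then yield that $K$ is an Ulam floating body and that $M_\delta(K)$ is also a Euclidean ball.

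Given that both $K_\delta$ and $M_\delta(K)$ are balls, I would translate each of these spherically symmetric conditions into integral equations on $S^{n-1}$ for the support function of $K$ and apply the spherical Radon transform (\ref{Radon}), together with a preliminary central-symmetry step (obtainable from the observation that $(-K)_\delta = -K_\delta = K_\delta$), to conclude that $K$ itself is a ball. The main obstacle is the differentiation step: passing from a single first-moment identity to a full isotropic second-moment identity is essentially a rigidity statement, and controlling the boundary-flux terms that arise when the varying hyperplane sweeps along $\partial K$ would require either smoothness of $K$ that is not assumed or a delicate measure-theoretic argument. This is presumably why the statement remains a conjecture.
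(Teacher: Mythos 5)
This statement is posed in the paper only as a conjecture, and is proved there only for $n=2$, by a geometric argument entirely unlike yours: one picks $u_1 \in S^1$ where $r_K$ attains a value $R>r$ with $\arccos(r/R)$ an irrational multiple of $\pi$, propagates $r_K = R$ along a dense arithmetic progression of directions using that the chords $[Ru_i, Ru_{i+1}]$ are tangent to $rB_2^2$ and bisected at the tangent point, and concludes by continuity of $r_K$. Your proposal aims at the general $n$ through Theorem~\ref{thm1}, and you are right that it does not close, but the obstruction is more structural than the boundary-regularity issue you flag.

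The key step, that differentiating Dupin's identity $g(K\cap H_{\delta,u}) = Ru$ along $u$ should produce the moment tensor $\int_{K\cap H_{\delta,u}}(x-Ru)\otimes(x-Ru)\,dx$, is not correct. When $H_{\delta,u(t)}$ tilts, the variation of the $(n-1)$-dimensional \emph{section} $K\cap H$ is supported on its $(n-2)$-dimensional boundary $\partial K\cap H$, so $\frac{d}{dt}\int_{K\cap H_{\delta,u(t)}} x\,dx$ is a boundary integral, not a bulk second moment. What the proof of Theorem~\ref{thm1} differentiates is the \emph{cap} centroid $X_{K,\delta}(u)$: there the variation is an $n$-dimensional thin slab whose thickness at $x\in K\cap H$ is linear in $\langle x - Ru, v\rangle$, and it is exactly that linear factor that raises the degree and yields $\int_{K\cap H}\langle x,v\rangle^2\,dx$. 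These are different derivatives, and only the cap one produces the isotropy condition~(\ref{eq-isotropic}); the section-centroid derivative instead mixes in the local geometry of $\partial K$ along $\partial K\cap H$. Separately, the central-symmetry preliminary is circular: $(-K)_\delta = -K_\delta = K_\delta$ gives $K = -K$ only if $K\mapsto K_\delta$ is injective, and that injectivity is essentially the uniqueness the conjecture asserts.
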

\par
\noindent
We now prove the two dimensional case of the conjecture. 
\par
\noindent
\begin{thm}
	Let $K\subset \R^2$ and suppose there is  $\delta \in \left(0, \frac {1}{2}\right)$ such
	that $K_\delta = r \  B^2_2$. Then $K = R \  B^2_2$, for some $R>0$.
\end{thm}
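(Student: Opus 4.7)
My plan is to leverage Dupin's property, which is available because $K_\delta = r B^2_2$ is smooth: by the lemma above, $K_{[\delta]} = K_\delta$, so every line tangent to the inner disk $r B^2_2$ meets $K$ in a chord whose midpoint is the point of tangency.

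The key geometric picture is that through each $p \in \partial K$ pass exactly two such Dupin chords, namely the two lines through $p$ tangent to $r B^2_2$. Reflecting $p$ through either tangent point therefore produces the opposite endpoint of the corresponding chord, which again lies on $\partial K$. Writing $p$ in polar coordinates $(\tilde R(\alpha), \alpha)$ with $\tilde R := r_K$ the radial function of $K$ about the origin (note $\tilde R > r$ since $K_\delta \subset \text{int}(K)$), the two tangent points to the disk sit at polar angles $\alpha \pm \beta(\alpha)$ with $\beta(\alpha) := \arccos(r/\tilde R(\alpha)) \in (0, \pi/2)$. A short computation in complex notation, using the identity $2\cos\beta \, e^{i\beta} - 1 = e^{2i\beta}$, shows that the reflection of $p$ through the tangent point at polar angle $\alpha + \beta(\alpha)$ is $\tilde R(\alpha)\, e^{i(\alpha + 2\beta(\alpha))}$. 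Since this reflection lies on $\partial K$, we arrive at the functional equation
\[ \tilde R(\alpha + 2\beta(\alpha)) = \tilde R(\alpha) \qquad \text{for every } \alpha \in [0, 2\pi). \]

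From this, I would finish via a short dynamical argument. Since $\beta$ depends only on $\tilde R$, iteration yields $\tilde R(\alpha_0 + 2k\beta(\alpha_0)) = \tilde R(\alpha_0)$ for every $k \in \Z$ and every $\alpha_0$. If $\tilde R$ were non-constant, then by continuity $\beta$ would attain a nondegenerate interval of values, so some $\alpha_0$ would have $\beta(\alpha_0)/\pi$ irrational. By Weyl's equidistribution theorem the orbit $\{\alpha_0 + 2k\beta(\alpha_0) \bmod 2\pi\}_{k \in \Z}$ would then be dense in $[0, 2\pi)$, and continuity of $\tilde R$ would force $\tilde R \equiv \tilde R(\alpha_0)$, contradicting non-constancy. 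Hence $\tilde R$ is constant and $K = R\, B^2_2$.

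The main obstacle is the geometric step that establishes the rotation-by-$2\beta(\alpha)$ symmetry of $\tilde R$: it rests on the Dupin bisection property together with the clean polar geometry of tangent lines from a point to a concentric circle. Once the functional equation is secured, the rest is standard irrational-rotation dynamics. I note that this approach is intrinsically two-dimensional, since in higher dimension there is a continuous family of hyperplanes tangent to $r B^n_2$ through each $p \in \partial K$, and no finite-dimensional orbit argument is immediately available.
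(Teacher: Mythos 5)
Your proposal is correct and follows essentially the same route as the paper: both use Dupin's bisection property (secured by the smoothness lemma since $K_\delta = rB_2^2$ is $C^1$) to show that reflecting a boundary point of $K$ through a tangency point with the inner disk lands back on $\partial K$, yielding the rotation-by-$2\theta$ invariance of the radial function, and then conclude by the density of irrational rotation orbits on $S^1$ together with continuity. Your complex-notation verification of the reflection identity and the explicit functional equation $\tilde R(\alpha + 2\beta(\alpha)) = \tilde R(\alpha)$ make the geometric step cleaner than the paper's figure-based argument, but the underlying idea is identical (and Weyl equidistribution is a slightly heavier tool than needed, since mere density of the orbit suffices).
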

\begin{proof}
	We shall prove that the radial function $r_K: S^1 \to \R$ is
	constant. If the continuous function $r_K$ is not constant, it
	must attain some value $R>r$ such that the angle $\theta =
	\arccos\left(\frac{r}{R}\right) \in \left(0,\frac{\pi}{2}\right)$ is
	not a rational multiple of $\pi$. Let $u_1\in S^1$ be the point with
	$r_K(u_1) = R$, and let $\left\{u_i \right\}_{i=1}^\infty$ be the
	arithmetic progression on $S^1$ with difference $2\theta$. We claim
	that $r_K$ is constant on $\left\{u_i \right\}$. Indeed, assuming
	$R u_i \in \partial K$, we consider the triangle with vertices
	$O,\, Ru_i, Ru_{i+1}$ (see the figure below). The edge
	$\left[Ru_i, Ru_{i+1}\right]$ is tangent to $K_\delta = r B^2_2$ at its
	midpoint $m_i$, and since $K_\delta$ is smooth, the chord on
	$\partial K$ containing $Ru_i$ and $m_i$ is bisected by $m_i$, which
	implies $Ru_{i+1} \in \partial K$, i.e., $r_K(u_{i+1}) = R$. Since
	$\theta$ is not a rational multiple of $\pi$, the sequence
	$\left\{u_i \right\}$ is dense in $S^1$. Since $r_K$ is constant
	on a dense set and continuous, it is constant on $S^1$, as required.
	\begin{center}
		\includegraphics[width=0.6\linewidth]{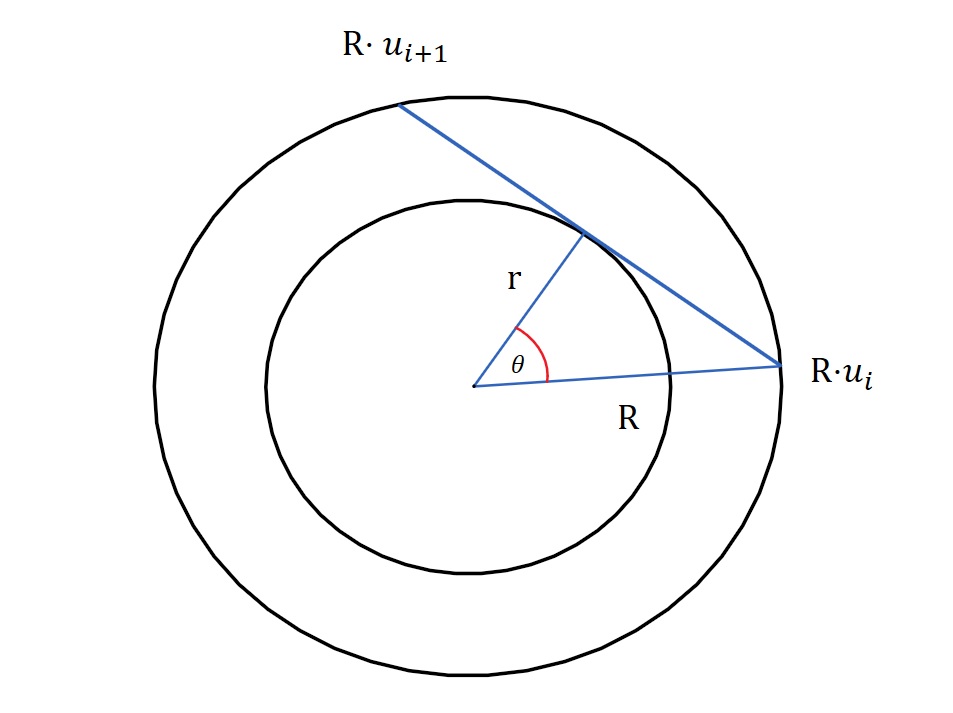}
	\end{center}
	
\end{proof}

\section{Proof of the main theorems}
\subsection{Proof of Theorem \ref{thm1}}

\begin{proof}
We first treat the case $n=2$. Also, we first consider when $K_\delta$ reduces to a point. Without loss of generality we can
assume that this point is $0$. Then we have for all $u \in S^1$ that $g(K \cap H_{\delta,u}) = 0$ by Dupin and thus  $\langle g(K \cap H_{\delta,u}), v \rangle =0$, for all
$v \in u^\perp \cap S^1$ and the condition reduces to $\int_{K \cap H_{\delta,u}}  \langle x, v \rangle^2 dx = C$. This observation is true in all dimensions.
\newline
Let $u \in S^1$. Let $v \in u^\perp \cap S^1 = H_{\delta,u} \cap S^1$. Then, as $H_{\delta,u}= \text{span}\{v\}$, we get for all $v \in S^1$,
\begin{eqnarray}\label{Gleichung1}
\int_{K \cap H_{\delta,u}}  \langle x, v \rangle^2 dx =
\int_{-r_K(v) }^{r_K(v)}x^2 dx = \frac{2}{3} r_K(v)^3,
\end{eqnarray}
as $r_K(v)=r_K(-v)$.
Hence if $\int_{K \cap H_{\delta,u}}  \langle x, v \rangle^2 dx=C$, then we get that for all $v \in S^1$ that $r_K(v)=C_1$, and hence 
$K$ is a Euclidean ball and therefore also $M_\delta(K)$ is a Euclidean ball.
\par
 For the other direction, we fix $u \in S^1$. We may assume that $u=(1,0)$, i.e., in polar coordinates
$u_0$ corresponds to $\theta=0$ and $r(\theta)=1$. For $\phi$ small, let $w=(\cos \phi, \sin\phi)$ and define the sets
\[
E_1 = H_{\delta, u}^+ \cap H_{\delta, w}^+ \cap K, \quad
E_2 = H_{\delta, u}^+ \cap H_{\delta, w}^- \cap K, \quad
E_3 = H_{\delta, u}^- \cap H_{\delta, w}^+ \cap K.
\]
In order to compute the derivative of  the boundary curve of $M_\delta(K)$ we
write
\begin{eqnarray*}
&&\delta \text{vol}_2(K) \cdot \big[ X_{K,\delta}(w) - X_{K,\delta}(u) \big]   \\
&&	= \int_{ E_1\cup  E_3}x\,dx - \int_{ E_1\cup  E_2}x\,dx  =   \int_{E_3}x\,dx - \int_{E_2}x\,dx\\
&&=\int_{\frac{\pi}{2}}^{\frac{\pi}{2} + \phi} 
  \int_{0}^{r_{K}(\theta)} 
	(\cos\theta, \sin \theta) \  r^2 \,dr\,d\theta
-
  \int_{-\frac{\pi}{2}}^{-\frac{\pi}{2} +\phi}
  \int_{0}^{r_{K}(\theta)}
	(\cos\theta, \sin \theta) \  r^2 \,dr\,d\theta  \\
&&=2 \int_{\frac{\pi}{2}}^{\frac{\pi}{2} +\phi} \int_{0}^{r_{K}(\theta)}
	(\cos\theta, \sin \theta) \  r^2 \,dr\,d\theta  = 
 \frac{2}{3} \int_{\frac{\pi}{2}}^{\frac{\pi}{2} +\phi}  r_K(\theta)^3 (\cos\theta, \sin \theta) \  d\theta.
\end{eqnarray*}
Thus
\begin{eqnarray*}
\frac{d}{ d \phi}\left[ X_{K,\delta}(w) - X_{K,\delta}(u) \right]
 =  \frac{2}{3 \  \delta \  \text{vol}_2(K) } \    r_K\left(\phi +\frac{\pi}{2} \right)^3 (- \sin \phi, \cos\phi)
\end{eqnarray*}
and hence
\begin{eqnarray}\label{Gleichung2}
\left|\frac{d}{ d \phi} \left[ X_{K,\delta}(w) - X_{K,\delta}(u) \right] \right|=  \frac{2}{3 \  \delta\   \text{vol}_2(K)} \    r_K\left(\phi +\frac{\pi}{2} \right)^3, 
\end{eqnarray}
where $| \cdot |$ denotes the Euclidean norm.
With  $z= w^\perp$, we get from (\ref{Gleichung1}) and (\ref{Gleichung2}) that 
\begin{eqnarray}\label{Gleichung3}
\left|\frac{d}{ d \phi} \left[  X_{K,\delta}(w) - X_{K,\delta}(u) \right] \right|=  \frac{1}{\delta \   \text{vol}_2(K) } \int_{K  \cap H_{\delta,w}} \langle x, z \rangle ^2 dx.
\end{eqnarray}
If $M_\delta(K)$ is a Euclidean ball with radius $R$, we write $X_{K,\delta}\   (\cos \phi, \sin\phi)= R \   (\cos \phi, \sin\phi)$ in polar coordinates 
and get from (\ref{Gleichung3})
\begin{eqnarray*}
\frac{1}{\delta \   \text{vol}_2(K) } \int_{K   \cap H_{\delta,w} } \langle x, z \rangle ^2 dx &=& \left|\frac{d}{ d \phi} \left[  X_{K,\delta}(w) - X_{K,\delta}(u) \right] \right|\\
&=&\left|\frac{d}{ d \phi} \left[   X_{K,\delta}(w) \right] \right| = R.
\end{eqnarray*}
\vskip 3mm
\noindent
To treat the case when $K_\delta$ does not consist of just one point, 
we introduce the following
coordinate system for the complement of an open, strictly convex body
$D\subset\R^2$ with smooth boundary (see also e.g., \cite{YaskinZhang}).   Let
$\gamma : [0,2\pi] \to \partial D$ be the inverse Gauss map, and
$T: [0,2\pi] \to S^1$ be the unit tangent vector to the curve at
$\gamma(\theta)$, oriented counterclockwise, i.e., 
\[
n(\theta):=n_D(\gamma(\theta)) =
\left(
\begin{array}{c}
\cos \theta \\
\sin \theta
\end{array}
\right),\qquad
T(\theta) =
\frac{\gamma'(\theta) }{\big| \gamma'(\theta) \big|} = 
\left(
\begin{array}{c}
-\sin \theta\\
\cos \theta 
\end{array}
\right).
\]
The coordinate system $F: \R\times [0, 2\pi] \to \R^2 \setminus D$ is
defined by
\begin{equation}\label{def-coord-sys}
F(r,\theta) = \gamma(\theta) + r T(\theta).
\end{equation}
Since $\frac{\partial F}{\partial r} = T$ and
$\frac{\partial F}{\partial \theta} = \gamma' -r n$, the Jacobian of
$F$ is given by $|r|$. 
\par
\noindent
Now we fix $0 < \delta <1/2$ and assume that $K_\delta$ does not just consist of one point. We then  set $D = \text{int} (K_\delta)$,  which has 
smooth boundary by assumption. 
Without loss of generality, we can assume that $0 \in \text{int} (K_\delta)$. 
It was shown in \cite{SchuettWerner94} that $K_\delta$ is strictly convex. 
Let $u\in S^1$, and assume without loss of
generality that $u = n(0)=(1, 0)$.
Let $w = n(\phi)$ for an angle $\phi>0$ small enough, such that
the lines $H_{\delta,u}$ and $H_{\delta,w}$ intersect in the interior
of $K$. Define the sets
\[
E_1 = H_{\delta, u}^+ \cap H_{\delta, w}^+ \cap K, \quad
E_2 = H_{\delta, u}^+ \cap H_{\delta, w}^- \cap K, \quad
E_3 = H_{\delta, u}^- \cap H_{\delta, w}^+ \cap K, 
\]
and let $E_4$ be the bounded connected component of
$\left(H_{\delta, u}^- \cap H_{\delta, w}^-\right) \setminus K_\delta$, 
see Figure.
\begin{center}
	\includegraphics[width=0.7\linewidth]{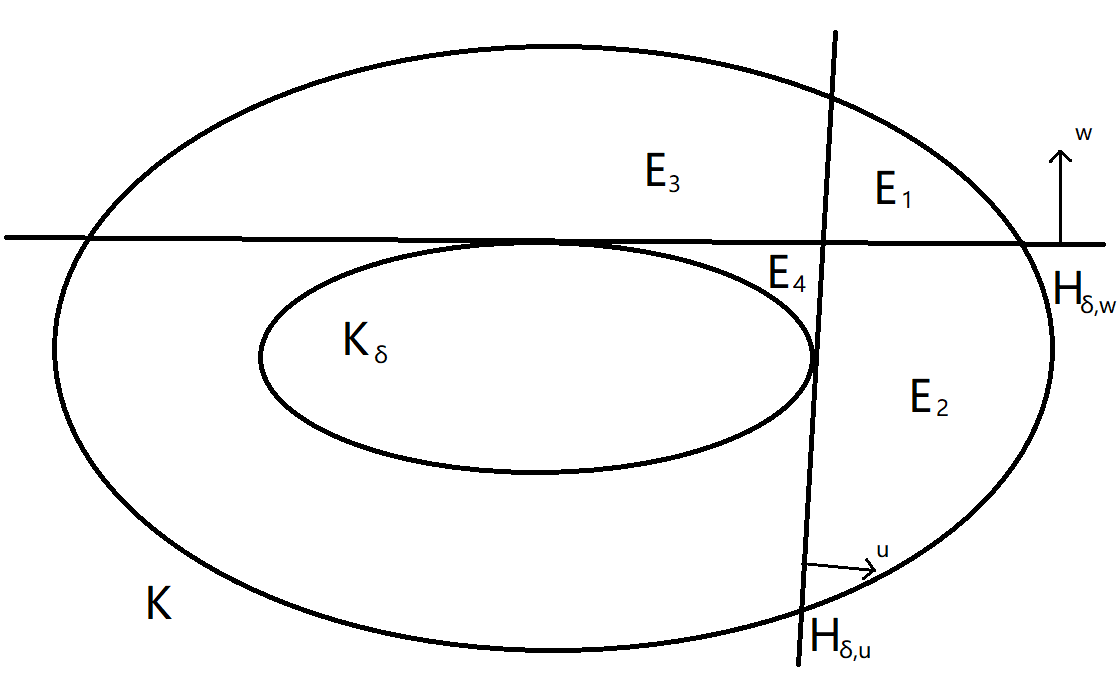}\\
	Figure 
\end{center}
Again, in order to compute the derivative of the boundary curve of $M_\delta(K)$ we
write
\begin{eqnarray*}
\delta  \   \text{vol}_2(K) \cdot  [ X_{K,\delta}(w) - X_{K,\delta}(u) ]
	&=& \int_{ E_1 \cup  E_3}x\,dx - \int_{ E_1 \cup  E_2}x\,dx \\
	&=& \int_{ E_3\cup  E_4}x\,dx - \int_ {E_2\cup  E_4}x\,dx . 
\end{eqnarray*}
Now we use the above introduced coordinate system. For  
 $n=n(\theta)$ and $T=T(\theta)$,  let $r_{K,K_\delta}(\theta)$ be such that $\gamma(\theta) + r_{K,K_\delta}(\theta) \  T \in \partial K$.  
As $K_\delta = K_{[\delta]}$,   $\gamma(\theta)$ is the midpoint of $n(\theta)^\perp \cap K$ by Dupin's characterization of $K_{[\delta]}$. Therefore 
\begin{eqnarray*}
&&\delta  \   \text{vol}_2(K) \cdot  [ X_{K,\delta}(w) - X_{K,\delta}(u) ] \\ &=& \int_{0}^{\phi}
  \int_{0}^{r_{K,K_\delta}(\theta)} 
	F(r,\theta)|r| \,dr\,d\theta -
  \int_{0}^{\phi}
  \int_{-r_{K,K_\delta}(\theta)}^{0}
	F(r,\theta)|r|\,dr\,d\theta  \\
&=&\int_{0}^{\phi}
  \int_{0}^{r_{K,K_\delta}(\theta)}
	F(r,\theta)r\,dr \,d\theta
+
  \int_{0}^{\phi}
  \int_{-r_{K,K_\delta}(\theta)}^{0}
	F(r,\theta)r\,dr \,d\theta  \\
&=&\int_{0}^{\phi}
\int_{-r_{K,K_\delta}(\theta)}^{r_{K,K_\delta}(\theta)}
F(r,\theta)r\,dr \,d\theta.
\end{eqnarray*}
Now  we  use  the definition of $F$. Thus
\begin{eqnarray*}
X_{K,\delta}(w) - X_{K,\delta}(u)
&=& \frac{1}{\delta  \   \text{vol}_2(K)} \int_{0}^{\phi}
\int_{-r_{K,K_\delta}(\theta)}^{r_{K,K_\delta}(\theta)} 
r \gamma(\theta) + r^2 T(\theta)\,dr \,d\theta  \\
&=& \frac{1}{\delta \  \text{vol}_2(K)} \int_{0}^{\phi}
\frac23  \  r_{K,K_\delta}^3(\theta) \  T(\theta) \,d\theta.
\end{eqnarray*}
As $M_\delta (K)$ is strictly convex and $C^1$ by \cite{HSW}, we can 
differentiate with respect to $\phi$ 
and get, 
\begin{equation}\label{eq-MdeltaK-gauss-deriv}
\frac{d X_{K,\delta}(n(\phi))}{d\phi} =
\frac{2\   r_{K,K_\delta}^3(\phi)}{3 \  \delta  \   \text{vol}_2(K)}  \  T(\phi).
\end{equation}
\par
\noindent
On the other hand, for any $\theta \in [0,2\pi]$,
\begin{eqnarray}\label{eq-section-2nd-moment}
&&\int_{K\cap H_{\delta, n(\theta)}} \langle x, T(\theta) \rangle^2 \,dx  = 
\int _{-r_{K,K_\delta}( \theta )}
^{ r_{K,K_\delta}( \theta )}
\langle  \gamma (\theta) +r T(\theta), T(\theta) \rangle^2 \   dr \nonumber\\
&&= 2 r_{K,K_\delta}(\theta ) \  \langle  \gamma (\theta), T(\theta) \rangle^2 +  2  
 \langle  \gamma (\theta), T(\theta) \rangle \int _{-r_{K,K_\delta}( \theta )}
^{ r_{K,K_\delta}( \theta )}
r  dr \nonumber +  \int _{-r_{K,K_\delta}( \theta )}
^{ r_{K,K_\delta}( \theta )} r^2 dr\\
&& =  \langle g(K \cap  H_{\delta, n(\theta)}), T(\theta) \rangle ^2  \  \text{vol}_1(K \cap  H_{\delta, n(\theta)} )  + 
\frac{2\   r_{K,K_\delta}^3(\theta)}{3},
\end{eqnarray}
since  $\gamma (\theta)$  is the centroid  $g(K \cap  H_{\delta, n(\theta)})$ of $K \cap  H_{\delta, n(\theta)}$.
Combining \eqref{eq-MdeltaK-gauss-deriv} and \eqref{eq-section-2nd-moment}, 
we get that for $\theta = \phi$,
\begin{equation*} \label{eq-isotropic1}
\left|\frac{d X_{K,\delta}(n(\phi))}{d\phi}  \right|= \frac{1}{\delta  \  \text{vol}_2(K)} \   \int_{K\cap H_{\delta, n(\phi)}} \langle x, T(\phi) \rangle^2 -  
\langle  g(K \cap  H_{\delta, n(\phi)}), T(\phi) \rangle^2 \,dx.
\end{equation*}
By \cite{HS, HSW}, the normal to $\partial M_\delta (K)$ at $X_{K,\delta} (n(\phi))$ is $n(\phi) = (\cos \phi, \sin \phi)$ and as $M_\delta (K)$ is strictly convex and $C^1$ by \cite{HSW}, 
$\xi(\phi) =
X_{K,\delta} (n(\phi))$ is a parametrization  of $\partial M_\delta (K)$ with
respect to the angle of the normal. The curvature is given by $\frac{d\phi}{ds}$
where $s$ is the arc length along the curve. Since $\frac{d\xi}{ds}$ is a unit
vector, we get by the chain rule $\frac{d\xi}{ds}=
\frac{d\xi}{d\phi} \frac{d\phi}{ds}$ that the radius of curvature is given by
\[
R(\phi) = \left|\frac{d X_{K,\delta}(n(\phi))}{d\phi}  \right| =
\frac{1}{\delta  \   \text{vol}_2(K)} \int_{K\cap H_{\delta, n(\phi)}} \langle x, T(\phi) \rangle^2  -  \langle  g(K \cap  H_{\delta, n(\phi)}), T(\phi) \rangle^2 \,dx.
\]
Since $M_\delta (K)$ is a disk if and only if its radius of curvature is constant,
the theorem follows.
\vskip 3mm
\noindent
Let now $n\geq 3$.
\par
\noindent
Let   $u \in S^{n-1}$ be arbitrary, but fixed, and let  $v \in S^{n-1}\cap u^\perp$.  We denote by  $W=\mbox{span}\{u,v\}$ the span of $u$ and $v$ and by  $W^\perp$  the  $(n-2)$-dimensional subspace that is the orthogonal complement of $W$.  $\bar K=K|W$ is the orthogonal projection of the convex body $K$ onto the $2$-dimensional subspace $W$. 
For a small  $\phi$, let $w = \cos \phi \  u + \sin \phi \  v.$ 
\newline
We define $\bar E_1$, $\bar E_2$ and  $\bar E_3$ as follows,  
$$
\bar E_1 =(H_{\delta, u}^+ \cap  H_{\delta,w}^+) \big | W, \  \   \bar E_2 =(H_{\delta, u}^+\cap H_{\delta,\eta}^-)\big |W, \  \  \bar E_3 = (H_{\delta, u}^-\cap H_{\delta, w}^+)\big |W
$$
and $\bar E_4$ is the curvilinear triangle enclosed by $H_{\delta, u}|W$, $H_{\delta, w} \big|W$, and the boundary of $\bar K_\delta=K_\delta|W$.
Then  the picture is identical to the previous Figure.
We let  
$$
E_i = \bar E_i\times W^\perp,   \ \   \text{ for} \  \   i=1, 2, 3,4.
$$
When  $K_\delta$ reduces to a point, we can assume without loss of generality that $K_\delta =\{0\}$. As noted before, the condition then reduces to 
$\int_{K \cap H_{\delta,u}}  \langle x, v \rangle^2 dx = C$.
In this case 
$\bar E_4 = \emptyset$ and the proof continues along the same lines 
as below.
Alternatively, one can  replace the coordinate system (\ref{def-coord-sys}) by the usual polar coordinates in $W$ as it was done in the case $n=2$. 
\par
\noindent
In the general case we thus have that 
\begin{eqnarray*}
&&\delta \  \vol(K) \left [X_{K,\delta}(w)-X_{K,\delta}(u)\right]
=\int_{K\cap H_{\delta, w}^+}x\,dx-\int_{K\cap H_{\delta, u}^+}x\,dx \\
&&=\int_{K\cap ( E_1\cup  E_3)}x\,dx-\int_{K\cap ( E_1\cup  E_2)}x\,dx
= \int_{K\cap  E_3}x\,dx-\int_{K\cap  E_2}x\,dx \\
&&= \int_{K\cap  (E_3 \cup E_4) }x\,dx-\int_{K\cap ( E_2\cup E_4) }x\,dx.
\end{eqnarray*}
For $x\in  W$, we consider the following parallel section function,
 \begin{equation} \label{AKW}
 A_{K,W} (x) = \mbox{vol}_{n-2} \left(K\cap \{W^\perp+x\}\right)
\end{equation}
and observe that by Fubini, 
\begin{equation*}
\delta \ \vol(K)  \left [ X_{K,\delta}(w) \right ]  = \int_{K \cap H_{\delta, w}^+}  z\   dz = \int_{\bar K} \left( \int_{(x+ W^\perp) \cap K \cap H_{\delta, w}^+ } y \  dy \right) dx.
\end{equation*}
We denote by $g(B) = \frac{1}{\vol(B)} \int_B y \  dy$ the centroid of the set $B$. Then we get
\begin{eqnarray*} \label{}
\delta(X_{K,\delta}(w))\big|W &= & \left(\int_{K\cap (E_3\cup  E_4)}x\,dx \right)  \Bigg | W  = 
\left(\int_{\bar K\cap (\bar E_3 \cup  \bar E_4)} \left( \int_{(x+W ^\perp) \cap K} y \ dy \right)  \,dx \right)\Bigg |  W \\
&=& \left(\int_{\bar K\cap (\bar E_3 \cup  \bar E_4)} \  A_{K,W} (x)\   g((x+W^\perp) \cap K) \  dx \right)\Bigg |  W\\
&=& \int_{\bar K\cap (\bar E_3 \cup  \bar E_4)} \  A_{K,W} (x)\  \left( g((x+W^\perp) \cap K)  \right)\big |  W \  dx \\
&=& \int_{\bar K\cap (\bar E_3 \cup  \bar E_4)} \  A_{K,W} (x)\  x \    dx,
\end{eqnarray*}
and similarly for $\delta \  \vol(K)(X_{K,\delta}(u))\big| W$.
Now we will use the  coordinate system  $F: \R\times [0, 2\pi] \to \R \setminus \text{int}\left(\bar K_\delta\right)$,  introduced earlier in (\ref{def-coord-sys}), 
\begin{equation*}
F(r,\theta) = \gamma(\theta) + r T(\theta).
\end{equation*}
We can assume that $n(0)=u$. Then $T(0)=v$.
We recall that the Jacobian of $F$ is given by $|r|$. 
We   abbreviate $n=n(\theta)$ and $T=T(\theta)$. We
let $r_{\bar K,\bar K_\delta}(n(\theta)), T(\theta)) = r_{\bar K,\bar K_\delta}(n, T)  >0$ be such that $\gamma(\theta) + r_{\bar K,\bar K_\delta}(n,T) \  T(\theta)  \in \partial \bar K$, 
and  $ r_{\bar K,\bar K_\delta}(n, -T)  >0$ be such that $\gamma(\theta) + r_{\bar K,\bar K_\delta}(n,T) \  (-T(\theta) ) \in \partial \bar K$.
We get 
\begin{align*}
&\hskip -15mm \delta  \  \vol(K) \left(X_{K,\delta}(w) -X_{K,\delta}(u)\right ) \big |W  \\
&\hskip -10mm =  \int_{0}^{\phi} \int_{0}^{r_{\bar K,\bar K_\delta}(n, T )} F (r,\theta) \   A_{K,W} (F(r,\theta))  \  |r|\,dr\,d\theta \\
&\hskip 25mm -
\int_{0}^{\phi} \int_{- r_{\bar K,\bar K_\delta}(n, -T )}^{0}  F(r,\theta) \  A_{K,W} (F(r,\theta))  \  |r|\,dr
\,d\theta \\
& \hskip -10mm =\int_{0}^{\phi}\int_{0}^{r_{\bar K,\bar K_\delta}(n, T )}   F (r,w) \   A_{K,W} (F(r,\theta))  \   r\,dr \,d\theta \\
&\hskip 25mm + \int_{0}^{\phi}\int_{-r_{\bar K,\bar K_\delta}(n, -T)}^{0}  F (r,\theta) \   A_{K,W} (F(r,\theta))  \   r\,dr \,d\theta \\
&\hskip -10mm =  \int_{0}^{\phi}\int_{-r_{\bar K,\bar K_\delta}(n, -T )}^{r_{\bar K,\bar K_\delta}(n, T)}  F (r,\theta) \   A_{K,W} (F(r,\theta))  \   r\,dr \,d\theta .
\end{align*}
We differentiate  with respect to $\phi$,   
\begin{eqnarray*}\label{equ2}
&& \delta \   \vol(K) \frac{d}{d\phi}\left((X_{K,\delta}(w)-X_{K,\delta}(u))\big|W\right) = \delta \   \vol(K) \frac{d}{d\phi}\left( (X_{K,\delta}(w))\big|W\right) \\
&&=\int_{-r_{\bar K,\bar K_\delta}(n(\phi), -T(\phi) )}^{r_{\bar K,\bar K_\delta}(n(\phi),T(\phi))} F(r, \phi) A_{K,W} (F(r, \phi)) \  r\,dr.
\end{eqnarray*}
Putting  $\phi=0$, results in
\begin{eqnarray}\label{GL1}
&& \delta \   \vol(K) \frac{d}{d\phi}\left( (X_{K,\delta}(w))\big|W\right) \Bigg| _{\phi = 0} 
=  \int_{-r_{\bar K,\bar K_\delta}(u, -v )}^{r_{\bar K,\bar K_\delta}(u,v)} F(r, 0) A_{K,W} (F(r, 0)) \  r\,dr \nonumber \\
&&= \int_{-r_{\bar K,\bar K_\delta}(u, -v )}^{r_{\bar K,\bar K_\delta}(u,v)} [\gamma(0) +r v] \  A_{K,W} (F(r, 0)) \  r\,dr \nonumber \\ 
&&=  \int_{-r_{\bar K,\bar K_\delta}(u, -v )}^{r_{\bar K,\bar K_\delta}(u,v)} \gamma(0) \  A_{K,W} (F(r, 0)) \  r\,dr  +   \int_{-r_{\bar K,\bar K_\delta}(u, -v )}^{r_{\bar K,\bar K_\delta}(u,v)} r^2 \  v \  A_{K,W} (F(r, 0)) \,dr  \nonumber\\ 
&&= v \   \int_{-r_{\bar K,\bar K_\delta}(u, -v )}^{r_{\bar K,\bar K_\delta}(u,v)} r^2 \    A_{K,W} (F(r, 0)) \,dr,
\end{eqnarray}
where the  last equality holds by Dupin since $H_{\delta, u}\cap K_\delta$ is the centroid of $H_{\delta, u}\cap K$, i.e.
\begin{equation}\label{null}
 \int_{-r_{\bar K,\bar K_\delta}(u, -v )}^{r_{\bar K,\bar K_\delta}(u,v)}  \gamma(0)  \  A_{K,W} (F(r, 0)) \  r\,dr = 0.
\end{equation}
Indeed, in the coordinate system (\ref{def-coord-sys}), the centroid of $H_{\delta, u}\cap K$ is $\gamma (0)$. Thus, with 
coordinate system (\ref{def-coord-sys}) we get as above
\begin{eqnarray*}
&&\text{vol}_{n-1} \left(K\cap H_{\delta, u} \right)  \langle v,  \gamma (0) \rangle  =   \int _{K\cap H_{\delta, u}} \langle v,  x \rangle \   dx \\
&&= \int_{-r_{\bar K,\bar K_\delta}(u, -v )}^{r_{\bar K,\bar K_\delta}(u,v)} \langle \gamma(0) +r v, v\rangle \  A_{K,W} (F(r, 0)) \  dr \\
&&= \langle \gamma(0), v \rangle \int_{-r_{\bar K,\bar K_\delta}(u, -v )}^{r_{\bar K,\bar K_\delta}(u,v)}   A_{K,W} (F(r, 0)) \  dr
+  \langle  v, v\rangle   \int_{-r_{\bar K,\bar K_\delta}(u, -v )}^{r_{\bar K,\bar K_\delta}(u,v)} r \  A_{K,W} (F(r, 0)) \  dr \\
&&= \langle \gamma(0), v \rangle  \  \text{vol}_{n-1} \left(K\cap H_{\delta, u} \right) + \int_{-r_{\bar K,\bar K_\delta}(u, -v )}^{r_{\bar K,\bar K_\delta}(u,v)} r \  A_{K,W} (F(r, 0)) \  dr.
\end{eqnarray*}
\par
\noindent
On the other hand,  again in the coordinate system (\ref{def-coord-sys}), and also using (\ref{null}), 
\begin{eqnarray} \label {GL2}
&&\int_{K\cap H_{\delta, u}} \langle x, v \rangle^2\ dx  = \int _{-{r_{\bar K,\bar K_\delta}(u,-v)}} ^{r_{\bar K,\bar K_\delta}(u,v)}  \langle \gamma(0) +r v, v\rangle ^2\   A_{K,W}(F(r,0))  dr \nonumber \\
&&= \langle \gamma(0), v\rangle  ^2 \int _{-{r_{\bar K,\bar K_\delta}(u,-v)}} ^{r_{\bar K,\bar K_\delta}(u,v)}   A_{K,W}(F(r,0))  dr + \int _{-{r_{\bar K,\bar K_\delta}(u,-v)}} ^{r_{\bar K,\bar K_\delta}(u,v)} r^2 \    A_{K,W}(F(r,0)) dr  \nonumber \\
&&= \langle \gamma(0), v\rangle  ^2  \ \text{vol}_{n-1} \left(K\cap H_{\delta, u} \right) + \int _{-{r_{\bar K,\bar K_\delta}(u,-v)}} ^{r_{\bar K,\bar K_\delta}(u,v)} r^2 \    A_{K,W}(F(r,0)) dr.
\end{eqnarray}
As $w=\cos \phi  \  u + \sin \phi \  v$,  it follows from (\ref{GL1}) and (\ref{GL2}) that
\begin{eqnarray*}
&&\hskip -4mm \Bigg |  \frac{d}{d\phi}\left( X_{K,\delta}(\cos \phi \  u + \sin \phi \  v)\Big| W\right) \Big| _{\phi = 0} \Bigg |= 
\frac{1}{ \delta   \vol(K)} \  \int_{K\cap H_{\delta, u}} \left(\langle x, v \rangle^2 - \langle \gamma(0), v\rangle  ^2 \right)dx \\
&&= \frac{1}{ \delta \   \vol(K)} \  \int_{K\cap H_{\delta, u}} \left(\langle x, v \rangle^2 - \langle g(K\cap H_{\delta, u}), v\rangle  ^2 \right)\ dx.
\end{eqnarray*}
Observe that in the case when $K_\delta =\{0\}$, $\langle g(K\cap H_{\delta, u}), v\rangle =0$. We have that 
$w= n(\phi)= \cos \phi  \  u + \sin \phi \  v  \in W$ is the outer unit normal to $M_\delta(K)$ in  $X_{\delta, K}(w)$. Therefore, 
$w$ is the  outer unit normal to $M_\delta(K)\big|W$ in $X_{\delta, K}(w)\big|W$. Again, as $M_\delta(K)$ and therefore $M_\delta(K)\big|W$
is strictly convex and $C^1$ by \cite{HSW}, 
$X_{\delta, K}(n(\phi))$ is a parametrization of the boundary of
$M_\delta(K)\big|W$ with respect to the angle of the normal.
Thus the curvature of $M_\delta(K) \big|W$ is constant, which implies that $M_\delta(K) \big|W$ is a disk. Since $W$ is  arbitrary, we get that every two dimensional projection of $M_\delta$ is a disk, and it follows that $M_\delta(K)$ is a Euclidean ball (\cite{GardnerBook}, Corollary 3.1.6).

\end{proof}

\vskip 3mm
\noindent
\subsection{Proof of Theorem \ref{thm2}}
\begin{proof}
Since $K$ is symmetric and has volume $1$ and  density $\rho=\frac{1}{2}$, we have that $\delta = \frac{1}{2}$, as noted above. Therefore,  $K_{\frac12} = K_{[\frac12]} = \{0\}$.
Since $K$ is an Ulam floating body,  the remark after Theorem \ref{thm1} implies that  for
any $u \in S^{n-1}$ and $v \in u^\perp\cap S^{n-1}$
	\begin{equation}\label{intD}
	\int_{u^\perp \cap K}\langle x, v\rangle^2\,dx =C, 
	\end{equation}
	for some constant $C$. 
Let $u \in S^{n-1}$ be arbitrary, but fixed.  We pass to  polar coordinates in $u^\perp$ and get for all $v \in u^\perp\cap S^{n-1}$, 
\begin{eqnarray*}
C= \int_{u^\perp \cap S^{n-1}}  \int_{t=0}^ {r_K(\xi)} t^n \langle \xi, v\rangle^2 dt \    d\sigma (\xi)  = \frac{1}{n+1}  \int_{u^\perp \cap S^{n-1}} r_K(\xi)^{n+1}  
\langle \xi, v\rangle^2   d\sigma (\xi).
\end{eqnarray*}
Now we integrate over all $v \in u^\perp\cap S^{n-1} = S^{n-2}$ w.r. to the normalized Haar measure $\mu$ on $S^{n-2}$.  We use that $\int_{S^{n-2} }   
\langle \xi, v\rangle^2  d\mu (v) = c_n \|\xi\| = c_n$, where $c_n= 2 \frac{\text{vol}_{n-2} \left(B^{n-2}\right)}{\text{vol}_{n-2} \left(S^{n-2}\right)}$ and  get that
\begin{eqnarray*}
\frac{(n+1) C}{c_n}=  \int_{u^\perp \cap S^{n-1}}  r_K(\xi)^{n+1}  d\sigma (\xi) = R \  r_K^{n+1}(u),
\end{eqnarray*}
where $R$ is the spherical Radon transform (\ref{Radon}).  We rewrite this equation as
\begin{eqnarray*}
\int_{u^\perp \cap S^{n-1}} d\sigma (\xi) =  \int_{u^\perp \cap S^{n-1}} \frac{2 \text{vol}_{n-2} \left(B^{n-2}\right)}{(n+1) C}  r_K(\xi)^{n+1}  d\sigma (\xi), 
\end{eqnarray*}
or
\begin{eqnarray*}
0 =  \int_{u^\perp \cap S^{n-1}} \left( \frac{2 \text{vol}_{n-1} \left(B^{n-2}\right)}{(n+1) C}  r_K(\xi)^{n+1} - 1 \right) d\sigma (\xi). 
\end{eqnarray*}
As $u$ was arbitrary and as $r_K$ is even, it then follows from e.g., Theorem C.2.4  of \cite{GardnerBook} that $r_K = \text{const.}$  for $\sigma$ almost all $u$
and as $r_K$ is continuous, $r_K = \text{const.}$ on $S^{n-1}$. Thus $K$ is a ball.
\end{proof}

\vskip 4mm
\noindent

\smallskip \noindent
Dan I. Florentin \\
Department of Mathematics\\
Bar-Ilan University, Israel \\
{\it e-mail}: danflorentin@gmail.com \\

\smallskip \noindent
Carsten Sch\"utt \\
Mathematisches Seminar \\
Christian-Albrechts University of Kiel\\
Ludewig-Meyn-Strasse 4, 24098 Kiel, Germany \\
{\it e-mail}: schuett@math.uni-kiel.de \\

\smallskip \noindent
Elisabeth M. Werner\\
 Department of Mathematics \ \ \ \ \ \ \ \ \ \ \ \ \ \ \ \ \ \ \ Universit\'{e} de Lille 1\\
Case Western Reserve University \ \ \ \ \ \ \ \ \ \ \ \ \ UFR de Math\'{e}matique \\
 Cleveland, Ohio 44106, U. S. A. \ \ \ \ \ \ \ \ \ \ \ \ \ \ \ 59655 Villeneuve d'Ascq, France\\
{\it elisabeth.werner@case.edu}\\ \\

\smallskip \noindent
Ning Zhang \\
School of Mathematics and Statistics \\
Huazhong University of Science and Technology\\
1037 Luoyu Road, Wuhan, Hubei 430074, China \\
{\it e-mail}: nzhang2@hust.edu.cn \\


\begin{thebibliography}{}

\bibitem{Amir}
D. Amir,
{\em Characterizations of Inner Product Spaces}.
Birkh\"auser Verlag (1986).


\bibitem{Auerbach}
H. Auerbach,
{\em Sur un Probl\`eme de M. Ulam Concernant l'\'Equilibre des Corps
Flottants}.
Studia Mathematica {\bf 7}, 121--142 (1938).


\bibitem{BaranyLarman1988}
I.~B\'ar\'any and D.G.~Larman,
{\em Convex bodies, economic cap coverings, random polytopes}.
Mathematika {\bf 35},   274--291 (1988).


\bibitem{Dupin}
C. Dupin,
{\em Application de G\'eom\'etrie et de M\'echanique}.
Paris, 1822.


\bibitem{Falconer}
K. J. Falconer,
{\em Applications of a result on spherical integration to the theory of convex sets}.
The American Mathematical Monthly {\bf90}, issue 10, 690--693 (1983).


\bibitem{GardnerBook}
R. J. Gardner,
{\em Geometric Tomography. Second edition}.
Cambridge University Press, New York (2006).


\bibitem{HS}
H. Huang and B. A. Slomka,
{\em Approximations of Convex Bodies by Measure-Generated Sets}.
Geometriae Dedicata {\bf 200}, issue 1, 173--196 (2019).


\bibitem{HSW}
H. Huang,  B. A. Slomka and E. M.  Werner,
{\em Ulam Floating Bodies}.
Journal of the London Mathematical Society  {\bf 100}, 425--446 (2019).


\bibitem{Ulam1}
R. Mauldin (ed.),
{\em The Scottish Book}.
Birkh\"auser Boston (1981).



\bibitem{MR}
M. Meyer and S.  Reisner,
{\em A Geometric Property of the Boundary of Symmetric Convex Bodies
and Convexity of Flotation Surfaces}.
Geometriae Dedicata  {\bf 37},  327--337 (1991).


\bibitem{Schneider}
R. Schneider,
{\em Functional Equations  Connected with Rotations and their Geometric Applications}.
L'Enseignement Math\'ematique {\bf 16}, 297--305 (1970).


\bibitem{SchneiderBook}
R. Schneider,
{\em Convex Bodies: The Brunn Minkowski Theory. Second expanded edition}.
Encyclopedia of Mathematics and its Applications, 151. Cambridge
University Press (2014).


\bibitem{SW1}
C. Sch\"utt and E. M. Werner,
{\em The Convex Floating Body}.
Mathematica Scandinavia  {\bf 66}, 275--290 (1990).


\bibitem{SchuettWerner94}
C. Sch\"utt and E. M. Werner,
{\em Homothetic Floating Bodies}.
Geometriae Dedicata {\bf 49}, 335--348 (1994).

\bibitem{Stancu}
A. Stancu,  {\em Two volume product inequalities and their
applications.}  Canadian Math. Bulletin {\bf 52}  464--472 (2009).



\bibitem{Ulam2}
R. S. Ulam, 
{\em A collection of mathematical problems}, 
Interscience, New York (1960).

\bibitem{Wegner1}
F. Wegner,
{\em Floating Bodies of Equilibrium}.
Studies of Applied mathematics {\bf 111}, 167--183 (2003).


\bibitem{Wegner2}
F. Wegner,
{\em Floating Bodies of Equilibrium in Three Dimensions. The Central
Symmetric Case}.
arxiv:0803.1043.

\bibitem{WernerYe}
E. M. Werner and D. Ye, 
{\em On the Homothety Conjecture}.
Indiana Univ. Math. J. {\bf 60} No. 1, 1--20 (2011).


\bibitem{YaskinZhang}
V. Yaskin and N.  Zhang, 
{\em Non-central Sections of Convex Bodies}.
Israel J. Math. {\bf 220}, 763--790 (2017).




\end{thebibliography}
\end{document}